\documentclass[preprint,12pt]{elsarticle}

\usepackage[T1]{fontenc}
\usepackage[utf8]{inputenc}
\usepackage{amsmath}
\usepackage{amssymb}
\usepackage{amsthm}
\usepackage{mathtools}
\usepackage{booktabs}
\usepackage{array}
\usepackage{tikz}
\usetikzlibrary{arrows.meta,positioning}
\usepackage{placeins}
\usepackage[hidelinks]{hyperref}

\newcommand{\one}{\mathbf{1}}
\newcommand{\so}{\mathfrak{so}}
\newcommand{\res}{\mathfrak{a}}
\newcommand{\Cons}{\mathcal{C}}

\newcommand{\Umb}{\operatorname{Umb}}
\newcommand{\ExpH}{\operatorname{Exp}_{\odot}}
\newcommand{\LogH}{\operatorname{Log}_{\odot}}
\newcommand{\normF}[1]{\left\lVert #1\right\rVert_F}
\newcommand{\ip}[2]{\left\langle #1,#2\right\rangle_F}
\newcommand{\R}{\mathbb{R}}

\theoremstyle{plain}
\newtheorem{theorem}{Theorem}[section]
\newtheorem{proposition}[theorem]{Proposition}
\newtheorem{corollary}[theorem]{Corollary}
\theoremstyle{definition}
\newtheorem{algorithm}{Algorithm}
\theoremstyle{remark}

\journal{Applied Mathematics and Computation}
\biboptions{sort&compress}

\begin{document}

\begin{frontmatter}

\title{Recursive Multilevel Decomposition of Pairwise Comparison Matrices: Algorithms and Inconsistency Diagnostics}

\author[yiu]{Mert {\c{C}}arbo{\u{g}}a\corref{cor1}}
\ead{mert.carboga@yuksekihtisas.edu.tr}

\author[laurentian]{Waldemar W. Koczkodaj}
\ead{wkoczkodaj@laurentian.ca}

\author[ankara]{Yusuf Yayl{\i}}
\ead{yayli@science.ankara.edu.tr}

\cortext[cor1]{Corresponding author}

\address[yiu]{Y{\"u}ksek \.{I}htisas University, Ankara, T{\"u}rkiye}
\address[laurentian]{Department of Computer Science,
Laurentian University, Sudbury, Ontario, Canada}
\address[ankara]{Department of Mathematics, Faculty of Science,
Ankara University, Ankara, T{\"u}rkiye}

\begin{abstract}
Positive reciprocal pairwise comparison matrices are studied in entrywise logarithmic coordinates. Starting from the established consistent--residual split, we show that, for any fixed ordering, the residual matrix admits a unique top-down decomposition into mutually Frobenius-orthogonal layers. The construction applies to every positive reciprocal PC matrix of size $n\geq3$, without additional spectral, sparsity, or genericity assumptions. An explicit active-matrix recursion and an equivalent cumulative parameter-extraction procedure have computational complexities $O(n^3)$ and $O(n^2)$, respectively. The squared Frobenius norms of the layers define an ordering-conditioned residual-energy profile that provides structural information complementary to the geometric-mean priorities and the Koczkodaj inconsistency indicator (Kii). Entrywise exponentiation yields an exact Hadamard-product reconstruction of the original reciprocal matrix. Numerical experiments confirm agreement between the two computational procedures, exact reconstruction, mutual orthogonality, and numerical accuracy to machine precision.
\end{abstract}

\begin{keyword}
pairwise comparison matrices
\sep multilevel decomposition
\sep inconsistency analysis
\sep Koczkodaj inconsistency indicator
\sep geometric mean
\sep Frobenius orthogonality
\MSC[2020] 15A23 \sep 22E60 \sep 15A60 \sep 91B06
\end{keyword}

\end{frontmatter}

\section{Introduction}
\label{sec:introduction}

Pairwise comparisons encode relative information among a set of
entities $E_1,\ldots,E_n$. In the multiplicative setting, a positive
entry $m_{ij}$ represents the assessed ratio of $E_i$ to $E_j$, and the
collection of these assessments forms a pairwise comparison matrix
$M=(m_{ij})$ \cite{koczkodaj2018axiomatization}. The present work considers the reciprocal case,
\[
m_{ii}=1,
\qquad
m_{ji}=m_{ij}^{-1}.
\]
Thus, the matrix records relative rather than absolute information. In
particular, if the assessments arise from positive scores
$s_1,\ldots,s_n$ through
\[
m_{ij}=\frac{s_i}{s_j},
\]
then multiplying all scores by the same positive constant leaves every
pairwise ratio unchanged.

For $n$ entities, reciprocity leaves $n(n-1)/2$ distinct pairwise
assessments. If these ratios are generated by a single underlying score
vector, they satisfy the multiplicative consistency condition
\[
m_{ij}m_{jk}=m_{ik}
\qquad
\text{for all } i,j,k.
\]
When pairwise assessments are supplied separately, this condition need
not hold, and inconsistency arises naturally. The resulting problem is
therefore not only to extract a consistent ranking component but also
to characterize the structure of the departure from consistency.

The present paper focuses on multiplicative reciprocal pairwise
comparisons; additive coordinates enter through the entrywise logarithm.
Applying
\[
A=\LogH(M)=(\log m_{ij})
\]
gives $A\in\mathfrak{so}(n)$, while multiplicative consistency becomes
the additive condition
\[
a_{ij}+a_{jk}=a_{ik},
\]
equivalently $a_{ij}=x_i-x_j$ for a score vector $x$. Logarithmic
coordinates therefore separate the consistency problem from the
multiplicative representation. The logarithmic transformation is
bijective on positive reciprocal matrices, so no information is lost at
the level of representation \cite{KoczkodajMarekYayli2021}. This should not be interpreted as
asserting that an optimum defined in the additive space remains optimal
after nonlinear back-transformation.

Established methods supply complementary information about pairwise data.
Row geometric means yield a priority vector \cite{CrawfordWilliams1985}, while
the Koczkodaj inconsistency indicator (Kii) summarizes departure from
consistency through triads
\cite{koczkodaj1993new,koczkodaj2018axiomatization}. Kii provides a triad-based
alternative to eigenvalue-based consistency measures; the mathematical
distinctions are discussed in \cite{KoczkodajSzwarc2014}. Orthogonal
projection methods compute a global consistent approximation and its residual component
\cite{KoczkodajSmarzewskiSzybowski2020,SmarzewskiRutka2020,
KoczkodajMarekYayli2021}, while graph- and Hodge-theoretic formulations interpret
pairwise data through edge flows and cyclic components
\cite{JiangLimYaoYe2011}. Computational orthogonalization provides
efficient constructions of the global split
\cite{BenitezKoczkodajKowalczyk2024}. These approaches identify a ranking, a
global inconsistency measure, or a global residual matrix, but do not by themselves
decompose the residual component into a hierarchy of mutually orthogonal
layers along a fixed nested sequence of dimensions.

The established Frobenius-orthogonal decomposition is
\begin{equation}
\so(n)=\Cons_n\overset{\perp}{\oplus}\res_n,
\qquad
\res_n=\{U\in\so(n):U\one_n=0\}.
\label{eq:intro-global-split}
\end{equation}
Here $\Cons_n$ denotes the additive consistent subspace, while $\res_n$ is
the zero-row-sum skew-symmetric residual space.
Accordingly, $A=C+U$ with $C\in\Cons_n$ and $U\in\res_n$.
We call $U=A-C$ the global residual matrix. Equivalently, $U$ is the
Frobenius-orthogonal component of $A$ relative to the consistent subspace
$\Cons_n$. This established global consistent--residual split serves as the starting point of our construction and is not
claimed here as a new result.
The problem considered here is whether $U$ itself can be decomposed
recursively while preserving exact reconstruction, uniqueness for a fixed
ordering, and Frobenius orthogonality.

\begingroup
\emergencystretch=1em
The residual space $\res_n$ also has a geometric interpretation: it is the Lie
algebra of the umbrella group, the special-orthogonal stabilizer of the
all-ones direction. Umbrella matrices originate in classical studies of
orthogonal transformations and were subsequently developed from Lie-group and
differential-geometric viewpoints
\cite{Alisbah1976,Hacisalihoglu1977,Ozdamar1977,Kuruoglu1983,Esin1986}.
More recent work connected this framework with pairwise comparison matrices
\cite{CarbogaYayli2023PC} and extended it to Lorentzian, Galilean, and
circulant settings \cite{CarbogaYayli2025Lorentz,CarbogaYayli2025Galilean,
CarbogaYayli2026Circulant}. This geometric lineage explains the terminology,
whereas the recursive decomposition below is matrix-theoretic and requires no
Lie-group machinery in its derivation.
\par
\endgroup

For a fixed ordering of the alternatives, the active indices determine
the top-down chain $n\to n-1\to\cdots\to3$. Every $U\in\res_n$ then admits a
unique decomposition into mutually Frobenius-orthogonal layers
\begin{equation}
U=L_n+L_{n-1}+\cdots+L_3.
\label{eq:intro-multilevel}
\end{equation}
The layer $L_k$ is the component extracted when the active dimension decreases
from $k$ to $k-1$. Its parameter vector has $k-2$ independent coordinates,
and
\[
\sum_{k=3}^{n}(k-2)=\frac{(n-1)(n-2)}{2}=\dim\res_n,
\]
so the total parameter count matches the dimension of the global residual
space.

No additional spectral, sparsity, or genericity condition is required: once
an ordering is fixed, the construction applies to every positive reciprocal
pairwise comparison matrix of size $n\geq3$. In the consistent case, the
global residual matrix $U$ is zero and all multilevel layers vanish.

Two equivalent computational realizations make the construction explicit.
The explicit active-matrix realization described in
Algorithm~\ref{alg:explicit} materializes and updates a
$(k-1)\times(k-1)$ active residual matrix at level $k$ and has computational
complexity $O(n^3)$. The compact cumulative procedure instead reads one active
column per level and extracts
all layer parameters in $O(n^2)$ operations, including the global split. The
latter bound concerns parameter extraction and does not include explicitly
materializing every full $n\times n$ layer matrix; its quadratic complexity is
of the same asymptotic order as the size of the dense $n\times n$ input.

Mutual orthogonality yields the energy identity
\[
\normF{U}^2=\sum_{k=3}^{n}\normF{L_k}^2.
\]
For $U\ne0$, the shares $p_k=\normF{L_k}^2/\normF{U}^2$ define a
\emph{nested-chain residual-energy profile}. Simultaneous relabeling preserves
the total residual energy but generally changes the individual shares.
Consequently, the residual-energy profile is an ordering-conditioned structural diagnostic,
not a permutation-invariant inconsistency index or a causal attribution of
inconsistency to individual alternatives.

Logarithmic triad-defect coordinates provide a complementary interpretation:
the Frobenius decomposition resolves an $\ell^2$-type residual energy, whereas
Kii records the $\ell^\infty$ magnitude of the worst triad defect. A worst
triad cannot in general be assigned uniquely to a single layer. The consistent
component $C$ recovers the geometric-mean priority vector, so recursively
resolving $U$ does not alter the associated ranking.

Entrywise exponentiation transfers the additive expansion back to the
reciprocal matrix and gives an exact Hadamard-product reconstruction. The
numerical experiments verify exact reconstruction and mutual orthogonality, confirm
the equivalence of the explicit and cumulative procedures, compare their
computational behavior, and illustrate the diagnostic interpretation and its
ordering dependence. Section~\ref{sec:global-residual} reviews logarithmic
coordinates and the global split; Section~\ref{sec:recursive} develops the
recursion and algorithms; Sections~\ref{sec:diagnostics} and
\ref{sec:reconstruction} treat diagnostics and multiplicative reconstruction;
Section~\ref{sec:numerics} presents the computations; and
Section~\ref{sec:conclusions} concludes the paper.
\section{Pairwise comparison matrices and the global residual matrix}
\label{sec:global-residual}

\subsection{Reciprocal PC matrices and logarithmic coordinates}
\label{subsec:log-coordinates}

Let
\[
M=(m_{ij})\in\R_{>0}^{n\times n},
\qquad
m_{ii}=1,
\qquad
m_{ij}m_{ji}=1.
\]
Such an $M$ is a positive reciprocal pairwise comparison matrix.
Multiplicative consistency means
\[
m_{ij}m_{jk}=m_{ik}
\qquad\text{for all }i,j,k.
\]
Define the entrywise logarithm and exponential by
\[
A=\LogH(M)=(\log m_{ij}),
\qquad
M=\ExpH(A)=(e^{a_{ij}}).
\]
Both operations act entrywise and are distinct from the ordinary matrix
logarithm and exponential.
Reciprocity gives $A^T=-A$, so $A\in\so(n)$. Conversely, every
$A\in\so(n)$ produces a positive reciprocal pairwise comparison matrix
through $\ExpH$. Multiplicative consistency becomes the additive condition
\[
a_{ij}+a_{jk}=a_{ik}.
\]
The Hadamard product is $(M\odot N)_{ij}=m_{ij}n_{ij}$. Ordinary addition in
logarithmic coordinates corresponds to Hadamard multiplication because
entrywise exponentiation satisfies
\begin{equation}
\ExpH(A+B)=\ExpH(A)\odot\ExpH(B).
\label{eq:entrywise-addition}
\end{equation}

The matrix space is equipped with the Frobenius inner product and norm,
\[
\ip{A}{B}=\operatorname{tr}(A^TB),
\qquad
\normF{A}=\sqrt{\ip{A}{A}}.
\]

\subsection{Consistency and the global orthogonal decomposition}
\label{subsec:global-orthogonal}

Let $\one_n=(1,\ldots,1)^T$, $J_n=\one_n\one_n^T$, and
\[
H_n=I_n-\frac1nJ_n.
\]
The additive consistent subspace is
\[
\Cons_n=
\{C_n(x)=x\one_n^T-\one_nx^T:x\in\one_n^\perp\},
\]
and the global residual space is
\[
\res_n=\{U\in\so(n):U\one_n=0\}.
\]
The following established decomposition is the starting point of the
recursive construction
\cite{KoczkodajSmarzewskiSzybowski2020,SmarzewskiRutka2020,KoczkodajMarekYayli2021}.

\begin{theorem}
\label{thm:global}
For every $A\in\so(n)$, define
\[
x=\frac1nA\one_n,
\qquad
C=C_n(x),
\qquad
U=A-C.
\]
Then
\[
C\in\Cons_n,
\qquad
U\in\res_n,
\qquad
\ip{C}{U}=0.
\]
Hence
\[
\so(n)=\Cons_n\overset{\perp}{\oplus}\res_n.
\]
The projections also satisfy
\begin{equation}
C=\frac1n(AJ_n+J_nA),
\qquad
U=H_nAH_n.
\label{eq:global-projections}
\end{equation}
\end{theorem}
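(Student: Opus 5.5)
The plan is a direct verification in four steps: membership of $C$, membership of $U$, orthogonality, and then the direct sum and the projection formulas.

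\emph{Membership of $C$.} Skew-symmetry gives $\one_n^TA\one_n=0$, so $x=\frac1nA\one_n$ satisfies $\one_n^Tx=0$. Hence $x\in\one_n^\perp$ and $C=C_n(x)\in\Cons_n$.

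\emph{Membership of $U$.} Since $A$ and $C$ are skew, $U=A-C$ is skew. Using $\one_n^T\one_n=n$ and $x^T\one_n=0$,
\[
C\one_n=x\,\one_n^T\one_n-\one_n\,x^T\one_n=nx=A\one_n .
\]
Therefore $U\one_n=0$ and $U\in\res_n$.

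\emph{Orthogonality.} Because $U^T=-U$,
\[
\ip{C}{U}=\operatorname{tr}\bigl((x\one_n^T-\one_nx^T)^TU\bigr)=\operatorname{tr}(\one_nx^TU)-\operatorname{tr}(x\one_n^TU).
\]
By cyclicity, the second trace is $\one_n^TUx=-(U\one_n)^Tx=0$. The first trace is $x^TU\one_n=0$. Hence $\ip{C}{U}=0$.

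\emph{Direct sum.} The decomposition $A=C+U$ shows $\so(n)=\Cons_n+\res_n$. The same trace computation, run for arbitrary elements, shows $\Cons_n\perp\res_n$: for any $x\in\one_n^\perp$ and $V\in\res_n$ it uses only $V\one_n=0$ and $V^T=-V$. Orthogonality forces $\Cons_n\cap\res_n=\{0\}$, so the sum is an orthogonal direct sum. As a consistency check on dimensions, $\dim\Cons_n=n-1$ because $x\mapsto C_n(x)$ is injective on $\one_n^\perp$, and $\dim\res_n=\binom{n-1}{2}$, which add up to $\binom n2$. The argument itself does not need this count.

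\emph{Projection formulas.} First, $AJ_n=(A\one_n)\one_n^T=nx\one_n^T$. Second, $J_nA=\one_n(\one_n^TA)=-\one_n(A\one_n)^T=-n\one_nx^T$. Together these give $C=\frac1n(AJ_n+J_nA)$. Expanding the product,
\[
H_nAH_n=A-\tfrac1n(J_nA+AJ_n)+\tfrac1{n^2}J_nAJ_n .
\]
Here $J_nAJ_n=\one_n(\one_n^TA\one_n)\one_n^T=0$. The sign bookkeeping in the middle term needs care: $-\frac1n(J_nA+AJ_n)$ equals $-x\one_n^T+\one_nx^T=-C$, so $H_nAH_n=A-C=U$.

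None of the steps presents a real obstacle. The only points needing attention are the sign bookkeeping in the $J_nA$ terms and remembering that skewness is what kills both $\one_n^TA\one_n$ and $J_nAJ_n$.
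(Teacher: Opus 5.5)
Your proof is correct and follows essentially the same route as the paper. Both arguments use $\one_n^TA\one_n=0$ from skew-symmetry, $C\one_n=A\one_n$, a trace computation for orthogonality using $U\one_n=0$ and $\one_n^TU=0$, and expansion of $H_nAH_n$ with $J_nAJ_n=0$. The paper runs the orthogonality computation directly for an arbitrary $C_n(y)$, which your direct-sum step also covers. Your explicit computation $J_nA=-n\one_nx^T$ only fills in sign bookkeeping that the paper leaves to the reader.
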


\begin{proof}
Skew-symmetry first gives
\[
\one_n^TA\one_n=-(\one_n^TA\one_n)^T=0,
\]
and therefore
\[
\one_n^Tx=\frac1n\one_n^TA\one_n=0.
\]
Thus $x\in\one_n^\perp$ and $C=C_n(x)\in\Cons_n$. Next,
\[
C\one_n=nx-\one_n(x^T\one_n)=A\one_n,
\]
so $U\one_n=0$ and hence $U\in\res_n$. For any
$C_n(y)\in\Cons_n$ and $U\in\res_n$,
\[
\ip{C_n(y)}{U}
=\operatorname{tr}\!\bigl((\one_ny^T-y\one_n^T)U\bigr)=0,
\]
because $U\one_n=0$ and $\one_n^TU=0$. Uniqueness follows from
orthogonality of the two subspaces. Finally, substituting
$x=n^{-1}A\one_n$ gives
\[
C=\frac1n(AJ_n+J_nA).
\]
Since $\one_n^TA\one_n=0$, expansion of $H_nAH_n$ then yields
$U=A-C=H_nAH_n$, which proves \eqref{eq:global-projections}.
\end{proof}

The orthogonality gives
\begin{equation}
\normF{A}^2=\normF{C}^2+\normF{U}^2.
\label{eq:global-energy}
\end{equation}
This identity quantifies the total residual energy but does not reveal its internal
distribution.

\subsection{The residual space and its umbrella interpretation}
\label{subsec:umbrella-residual}

The umbrella group is
\[
\Umb(n)=\{Q\in SO(n):Q\one_n=\one_n\}.
\]
It fixes the all-ones direction, and its Lie algebra is the matrix space
$\res_n$. This is the only group-theoretic fact needed later; the recursive
derivation itself is matrix-theoretic. The residual-space dimension is
\[
\dim\res_n=\frac{(n-1)(n-2)}2.
\]
The recursive construction follows the active-size chain
\[
\res_n\longrightarrow\res_{n-1}\longrightarrow\cdots
\longrightarrow\res_3.
\]
Throughout the construction, this chain is understood relative to the
fixed ordering of the alternatives; a different ordering produces a different
sequence of leading principal active blocks.
\section{Recursive multilevel decomposition}
\label{sec:recursive}

\subsection{One-step top-down reduction}
\label{subsec:one-step}

At an active size $k\geq3$, write
\[
U^{(k)}=
\begin{pmatrix}
B^{(k)}&a^{(k)}\\
-(a^{(k)})^T&0
\end{pmatrix}
\in\res_k,
\qquad m=k-1.
\]
The zero-row-sum condition gives
\[
B^{(k)}\one_m+a^{(k)}=0.
\]
Set
\[
v_k=\frac1mB^{(k)}\one_m,
\qquad a^{(k)}=-mv_k.
\]
Because $B^{(k)}$ is skew-symmetric,
\[
\one_m^Tv_k
=\frac1m\one_m^TB^{(k)}\one_m=0,
\]
so $v_k\in\one_m^\perp$.
For $v\in\one_m^\perp$, define
\[
G_k(v)=
\begin{pmatrix}
v\one_m^T-\one_mv^T&-mv\\
mv^T&0
\end{pmatrix},
\qquad
\iota_k(W)=
\begin{pmatrix}
W&0\\0&0
\end{pmatrix},
\qquad W\in\res_{k-1}.
\]
The matrix $G_k(v)$ is skew-symmetric, and
\[
G_k(v)\one_k=
\begin{pmatrix}
m v-\one_m(v^T\one_m)-mv\\
m v^T\one_m
\end{pmatrix}
=0.
\]
Hence $G_k(v)\in\res_k$ whenever $v\in\one_m^\perp$.

\begin{theorem}
\label{thm:one-step}
Every $U^{(k)}\in\res_k$ admits a unique Frobenius-orthogonal
decomposition
\begin{equation}
U^{(k)}=G_k(v_k)+\iota_k(U^{(k-1)}),
\label{eq:one-step}
\end{equation}
where
\[
v_k=\frac1{k-1}B^{(k)}\one_{k-1}
\]
and
\begin{equation}
U^{(k-1)}=
B^{(k)}-
\bigl(v_k\one_{k-1}^T-\one_{k-1}v_k^T\bigr)
\in\res_{k-1}.
\label{eq:active-update}
\end{equation}
\end{theorem}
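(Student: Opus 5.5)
The proof has four parts, taken in order: the identity \eqref{eq:one-step}, membership of $U^{(k-1)}$ in $\res_{k-1}$, orthogonality, and uniqueness. Throughout, $m=k-1$, and I use the relations $a^{(k)}=-mv_k$ and $v_k\in\one_m^\perp$ established just before the statement.

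\emph{Identity.} Adding the blocks gives
\[
G_k(v_k)+\iota_k(U^{(k-1)})=\begin{pmatrix}B^{(k)}&-mv_k\\ mv_k^T&0\end{pmatrix}.
\]
Since $a^{(k)}=-mv_k$, this is exactly $U^{(k)}$.

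\emph{Membership.} $U^{(k-1)}$ is a difference of skew-symmetric matrices, so it is skew-symmetric. Its row sums are
\[
U^{(k-1)}\one_m=B^{(k)}\one_m-mv_k+\one_m(v_k^T\one_m)=mv_k-mv_k+0=0.
\]
Hence $U^{(k-1)}\in\res_{k-1}$.

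\emph{Orthogonality.} It suffices to show that $G_k(v)\perp\iota_k(W)$ for every $v\in\one_m^\perp$ and $W\in\res_{k-1}$. The pairing only sees the upper-left block:
\[
\ip{G_k(v)}{\iota_k(W)}=\ip{v\one_m^T-\one_mv^T}{W}.
\]
This is the same computation as in Theorem~\ref{thm:global} at size $m$: the inner product is a combination of terms $v^TW\one_m$ and $\one_m^TWv$. Both vanish because $W\one_m=0$ and $\one_m^TW=(W^T\one_m)^T=-(W\one_m)^T=0$.

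\emph{Uniqueness.} Let $\mathcal G_k=\{G_k(v):v\in\one_m^\perp\}$. By the previous step, $\mathcal G_k$ and $\iota_k(\res_{k-1})$ are orthogonal subspaces of $\res_k$, so their intersection is $\{0\}$. Suppose $U^{(k)}=G_k(v)+\iota_k(W)$ is any decomposition with $v\in\one_m^\perp$ and $W\in\res_{k-1}$. Then the last column gives $a^{(k)}=-mv$, which forces $v=v_k$. The upper-left block then forces $W=B^{(k)}-(v_k\one_m^T-\one_mv_k^T)$, which is $U^{(k-1)}$. So the decomposition is unique.

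The parameter counts are consistent: $\dim\mathcal G_k+\dim\res_{k-1}=(k-2)+\tfrac{(k-2)(k-3)}2=\tfrac{(k-1)(k-2)}2=\dim\res_k$, so $\res_k=\mathcal G_k\overset{\perp}{\oplus}\iota_k(\res_{k-1})$. The map $v\mapsto G_k(v)$ is injective since its last column is $-mv$.

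There is no real obstacle; everything is block bookkeeping. The only step needing care is the orthogonality pairing, where the skew-symmetry of $W$ converts $W\one_m=0$ into $\one_m^TW=0$ so that both trace terms vanish.
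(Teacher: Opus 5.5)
Your proof is correct and follows the paper's argument essentially step for step. The shared steps are block-sum reconstruction via $a^{(k)}=-(k-1)v_k$, the row-sum check $U^{(k-1)}\one_{k-1}=0$, orthogonality reduced to the upper-left block using $W\one_{k-1}=0=\one_{k-1}^TW$, and uniqueness because the last column forces $v=v_k$ and the leading block then forces $W=U^{(k-1)}$. Your explicit handling of an arbitrary competing decomposition and your dimension count add a little detail but are not a different route.
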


\begin{proof}
By the preceding observation,
\[
v_k\in\one_{k-1}^\perp
\quad\text{and}\quad
G_k(v_k)\in\res_k.
\]
The matrix in
\eqref{eq:active-update} is skew-symmetric, and
\[
U^{(k-1)}\one_{k-1}
=B^{(k)}\one_{k-1}-(k-1)v_k=0.
\]
Therefore $U^{(k-1)}\in\res_{k-1}$. The leading blocks in
\eqref{eq:one-step} sum to $B^{(k)}$, while the last column of $G_k(v_k)$ is
$-(k-1)v_k=a^{(k)}$; hence the formulas reconstruct $U^{(k)}$. For every
$W\in\res_{k-1}$,
\[
\ip{G_k(v_k)}{\iota_k(W)}
=\left\langle v_k\one_{k-1}^T-\one_{k-1}v_k^T,W\right\rangle_F=0.
\]
Indeed, $W\one_{k-1}=0$ and $\one_{k-1}^TW=0$. Finally, $v_k$ is determined
by $(k-1)^{-1}B^{(k)}\one_{k-1}$, equivalently by the active last column
$a^{(k)}=-(k-1)v_k$; once $v_k$ is known, $U^{(k-1)}$ is determined by subtraction.
This proves uniqueness.
\end{proof}

Direct calculation gives the layer-energy formula
\begin{equation}
\normF{G_k(v)}^2=2k(k-1)\lVert v\rVert_2^2.
\label{eq:local-layer-energy}
\end{equation}
Thus one reduction step separates the newly extracted $k$-level component
from the embedded copy of the lower-dimensional active residual matrix.

\subsection{Full multilevel decomposition}
\label{subsec:full-multilevel}

For $3\leq k\leq n$, define
\[
\iota_{k,n}(X)=
\begin{pmatrix}
X&0\\0&0_{n-k}
\end{pmatrix},
\qquad
L_k=\iota_{k,n}(G_k(v_k)).
\]

\begin{theorem}
\label{thm:full-multilevel}
For a fixed ordering of the alternatives, every $U\in\res_n$ admits a
unique decomposition
\begin{equation}
U=L_n+L_{n-1}+\cdots+L_3,
\label{eq:full-residual}
\end{equation}
and the layers are mutually Frobenius orthogonal. Consequently, every
$A\in\so(n)$ has the expansion
\begin{equation}
A=C_n(x)+L_n+L_{n-1}+\cdots+L_3.
\label{eq:full-additive}
\end{equation}
The multilevel part is unique relative to the nested chain induced by the
fixed ordering.
The recursion terminates at $k=3$ because $\res_2=\{0\}$.
\end{theorem}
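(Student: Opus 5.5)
The plan is to iterate Theorem~\ref{thm:one-step} down the chain $n\to n-1\to\cdots\to3$ and embed each step into $\R^{n\times n}$ with the maps $\iota_{k,n}$. Set $U^{(n)}=U$. Applying Theorem~\ref{thm:one-step} at sizes $k=n,n-1,\ldots,3$ produces $v_k\in\one_{k-1}^\perp$ and $U^{(k-1)}\in\res_{k-1}$ such that
\[
U^{(k)}=G_k(v_k)+\iota_k(U^{(k-1)}).
\]
The recursion stops at $U^{(2)}\in\res_2$. A $2\times2$ skew-symmetric matrix with zero row sums has $-u_{12}=0$ in its first row, so $\res_2=\{0\}$ and $U^{(2)}=0$. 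Since $\iota_{k,n}\circ\iota_k=\iota_{k-1,n}$, applying $\iota_{k,n}$ to the one-step identity gives
\[
\iota_{k,n}(U^{(k)})=L_k+\iota_{k-1,n}(U^{(k-1)}).
\]
Summing (telescoping) from $k=n$ down to $k=3$ then yields \eqref{eq:full-residual}, because $\iota_{n,n}(U^{(n)})=U$ and $\iota_{2,n}(U^{(2)})=0$. Combining this with Theorem~\ref{thm:global} gives \eqref{eq:full-additive}.

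For mutual orthogonality, I take $j<k$. The layer $L_j$ is supported in the leading $j\times j$ block, and the leading $(k-1)\times(k-1)$ block contains it. So $L_j=\iota_{k-1,n}(W)$ with $W=\iota_{j,k-1}(G_j(v_j))$. The matrix $W$ is skew-symmetric, and its row sums vanish because $G_j(v_j)\in\res_j$ and padding with zeros preserves this. Hence $W\in\res_{k-1}$. Embedding isometrically into size $k$ and then $n$, we get
\[
\ip{L_k}{L_j}=\ip{G_k(v_k)}{\iota_k(W)}.
\]
The orthogonality statement in the proof of Theorem~\ref{thm:one-step} holds for every $W\in\res_{k-1}$, so this inner product is zero. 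The point to check here is that zero-padding maps $\res_j$ into $\res_{k-1}$; this is immediate.

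Uniqueness I would prove by induction on $n$. The base case $n=3$ is one step with $U^{(2)}=0$. The claim to establish is: if $U=\sum_{k=3}^n\iota_{k,n}(G_k(w_k))$ with $w_k\in\one_{k-1}^\perp$, then $w_k=v_k$ for all $k$. The only term with nonzero entries in the $n$-th row and column is $G_n(w_n)$, since the other terms are embedded in the leading $(n-1)$ block. So the last column of $U$ is $-(n-1)w_n$, which forces $w_n=v_n$.

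Subtracting $G_n(v_n)$ leaves $\iota_n(U^{(n-1)})$, which then equals $\iota_n\bigl(\sum_{k\le n-1}\iota_{k,n-1}(G_k(w_k))\bigr)$. Removing the embedding and applying the induction hypothesis in $\res_{n-1}$ gives the rest. Alternatively, the $G_k$ maps are injective (by \eqref{eq:local-layer-energy}) and the layer spaces are mutually orthogonal, so the sum of these spaces is direct. The parameter count $\sum(k-2)=\dim\res_n$ then shows this direct sum is all of $\res_n$, which also gives uniqueness. The main care needed is to state uniqueness relative to the prescribed form of the layers, namely $L_k=\iota_{k,n}(G_k(w_k))$ for the fixed ordering; this is what "relative to the nested chain" means.
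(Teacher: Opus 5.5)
Your proposal is correct and follows the paper's proof: you iterate Theorem~\ref{thm:one-step} down the fixed chain, get orthogonality because every later layer lies in the embedded space $\iota_k(\res_{k-1})$ and the leading-block embedding is isometric, stop at $\res_2=\{0\}$, and get uniqueness level by level because the last column determines each $v_k$. The telescoping via $\iota_{k,n}\circ\iota_k=\iota_{k-1,n}$, the explicit induction for uniqueness, and the alternative dimension count $\sum_{k=3}^n(k-2)=\dim\res_n$ are all valid and only make explicit what the paper's short proof leaves implicit.
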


\begin{proof}
Apply Theorem~\ref{thm:one-step} successively from active size $n$ to active
size $3$ along the fixed chain. At level $k$, the extracted matrix $G_k(v_k)$
is orthogonal to the embedded residual space $\iota_k(\res_{k-1})$. Every
subsequent lower-level layer lies in that embedded space, so the $k$-level
component is orthogonal to every layer extracted later. The leading-block
embedding into dimension $n$ preserves the Frobenius inner product; hence all
$L_k$ are mutually orthogonal. The recursion stops because $\res_2=\{0\}$,
and uniqueness follows successively from Theorem~\ref{thm:one-step} for the
fixed chain. Combining the residual decomposition with
Theorem~\ref{thm:global} gives \eqref{eq:full-additive}.
\end{proof}

The layer parameters account for all residual degrees of freedom:
\[
\sum_{k=3}^n(k-2)=\frac{(n-1)(n-2)}2=\dim\res_n.
\]
Together with the $n-1$ parameters of the consistent component, this gives
\[
(n-1)+\sum_{k=3}^n(k-2)=\frac{n(n-1)}2=\dim\so(n).
\]

Figure~\ref{fig:multilevel-residual-decomposition} summarizes the recursive
mechanism underlying Theorem~\ref{thm:full-multilevel}. The original
pairwise comparison matrix $M$ remains unchanged throughout the construction.
After the global split $A=C_n(x)+U$, only the active residual matrix is
propagated through the nested dimensions: at level $k$, the
Frobenius-orthogonal layer $L_k$ is extracted and the remainder becomes
$U^{(k-1)}$.

\begin{figure}[!htbp]
\centering
\begin{tikzpicture}[
  node distance=5mm and 9mm,
  flowbox/.style={draw, align=center, inner sep=3pt,
                  text width=0.50\linewidth},
  activebox/.style={draw, align=center, inner sep=3pt,
                    text width=0.19\linewidth},
  relationbox/.style={draw, align=left, inner sep=4pt,
                      text width=0.40\linewidth},
  summarybox/.style={draw, align=center, inner sep=4pt,
                     text width=0.58\linewidth},
  flowarrow/.style={-{Latex[length=2.2mm,width=1.6mm]}, line width=0.6pt},
  relationline/.style={dashed, line width=0.5pt}
]
\node[flowbox] (M) {\textbf{Positive reciprocal PC matrix $M$}\\[-2pt]
  \emph{original matrix remains fixed}};
\node[flowbox, below=of M] (A) {$A=\LogH(M)$};
\node[flowbox, below=of A] (split) {$A=C_n(x)+U$};
\node[activebox, below=of split] (Un) {Active residual matrix\\
  $U^{(n)}=U$};
\node[activebox, below=of Un] (Unm) {Active residual matrix\\
  $U^{(n-1)}$};
\node[below=3mm of Unm] (dots) {$\vdots$};
\node[activebox, below=3mm of dots] (Uthree) {Active residual matrix\\
  $U^{(3)}$};
\node[relationbox, right=of Unm] (relation) {\textbf{General level $k$}\\[2pt]
  $\displaystyle U^{(k)}=G_k(v_k)+\iota_k\!\left(U^{(k-1)}\right)$\\[3pt]
  $\displaystyle L_k=\iota_{k,n}\!\left(G_k(v_k)\right)$};
\node[summarybox, below=5mm of Uthree] (summary) {
  $\displaystyle U=L_n+L_{n-1}+\cdots+L_3$\\[2pt]
  \emph{mutually Frobenius orthogonal}\\[-1pt]
  $\displaystyle \ip{L_j}{L_k}=0,\qquad j\ne k$};

\draw[flowarrow] (M) -- node[right, font=\small] {entrywise logarithm} (A);
\draw[flowarrow] (A) -- node[right, font=\small] {global orthogonal split} (split);
\draw[flowarrow] (split) -- (Un);
\draw[flowarrow] (Un) -- node[left, font=\small] {extract $L_n$} (Unm);
\draw[flowarrow] (Unm) -- node[left, font=\small] {extract $L_{n-1}$} (dots);
\draw[flowarrow] (dots) -- (Uthree);
\draw[flowarrow] (Uthree) -- node[right, font=\small] {extract $L_3$} (summary);
\draw[relationline] (Unm.east) -- (relation.west);
\end{tikzpicture}
\caption{Recursive multilevel decomposition of the global residual matrix.}
\label{fig:multilevel-residual-decomposition}
\end{figure}
\FloatBarrier

\subsection{Explicit recursive extraction algorithm}
\label{subsec:explicit-algorithm}

The decomposition can be computed without solving a global linear system.

\begin{algorithm}
\label{alg:explicit}
\textbf{Input:} $A\in\so(n)$.

\textbf{Procedure:}
\begin{enumerate}
\item Compute $x\leftarrow n^{-1}A\one_n$,
$C\leftarrow C_n(x)$, and $U^{(n)}\leftarrow A-C$.
\item For $k=n,n-1,\ldots,3$:
  \begin{enumerate}
  \item let $B^{(k)}$ be the leading $(k-1)\times(k-1)$ block of
  $U^{(k)}$;
  \item set $v_k\leftarrow(k-1)^{-1}B^{(k)}\one_{k-1}$;
  \item if $k>3$, set
  \[
  U^{(k-1)}\leftarrow B^{(k)}-
  \bigl(v_k\one_{k-1}^T-\one_{k-1}v_k^T\bigr).
  \]
  \end{enumerate}
\end{enumerate}
\textbf{Output:} $x,v_n,v_{n-1},\ldots,v_3$.
\end{algorithm}

In the explicit implementation of Algorithm~\ref{alg:explicit}, the active
$(k-1)\times(k-1)$ residual matrix is represented and updated at every level. The
work at level $k$ is therefore $O(k^2)$, and
\[
\sum_{k=3}^n O(k^2)=O(n^3).
\]
This bound applies to the active-matrix realization written in
Algorithm~\ref{alg:explicit}; it is not a lower bound for every implementation
of the recurrence.

\subsection{Compact implementation and computational cost}
\label{subsec:compact}

The active last column gives a compact alternative. Since
$U^{(k)}\one_k=0$,
\[
v_k=-\frac1{k-1}a^{(k)}.
\]
Let $U_{[k]}$ denote the leading $k\times k$ block of the original global
residual matrix $U$, initialize $s^{(n)}=0$, and suppose
\begin{equation}
U^{(k)}=U_{[k]}-
\bigl(s^{(k)}\one_k^T-\one_k(s^{(k)})^T\bigr).
\label{eq:cumulative-assumption}
\end{equation}
For $i<k$ this gives
\[
a_i^{(k)}=u_{ik}-\bigl(s_i^{(k)}-s_k^{(k)}\bigr),
\]
so the compact update is
\begin{equation}
(v_k)_i=-\frac{u_{ik}-s_i^{(k)}+s_k^{(k)}}{k-1},
\qquad
s^{(k-1)}=s_{1:k-1}^{(k)}+v_k.
\label{eq:compact-update}
\end{equation}

\begin{proposition}
\label{prop:compact}
The cumulative procedure \eqref{eq:compact-update} produces exactly the same
parameters $v_n,\ldots,v_3$ as Algorithm~\ref{alg:explicit}.
\end{proposition}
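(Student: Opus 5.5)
The argument is a downward induction on the active size $k$, from $k=n$ to $k=3$. The invariant is the cumulative representation \eqref{eq:cumulative-assumption}: the active residual matrix $U^{(k)}$ produced by Algorithm~\ref{alg:explicit} equals the leading block $U_{[k]}$ of the global residual, minus a consistent correction $s^{(k)}\one_k^T-\one_k(s^{(k)})^T$. Here $s^{(k)}$ is the vector generated by \eqref{eq:compact-update}.

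\emph{Base case.} Take $k=n$ and $s^{(n)}=0$. Step~1 of Algorithm~\ref{alg:explicit} gives $U^{(n)}=A-C=U=U_{[n]}$, so the invariant holds trivially.

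\emph{Parameters at level $k$.} Assume the invariant holds at level $k$. By Theorem~\ref{thm:one-step}, $U^{(k)}\in\res_k$. The zero-row-sum identity $B^{(k)}\one_{k-1}+a^{(k)}=0$ then shows that the explicit parameter $v_k=(k-1)^{-1}B^{(k)}\one_{k-1}$ equals $-(k-1)^{-1}a^{(k)}$. Reading the last column of \eqref{eq:cumulative-assumption} entrywise for $i<k$ gives
\[
a_i^{(k)}=u_{ik}-\bigl(s_i^{(k)}-s_k^{(k)}\bigr).
\]
This is exactly the formula for $(v_k)_i$ in \eqref{eq:compact-update}. So the two procedures produce the same $v_k$ at this level.

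\emph{Propagating the invariant.} For the leading $(k-1)\times(k-1)$ block, \eqref{eq:cumulative-assumption} gives
\[
B^{(k)}=U_{[k-1]}-\bigl(s^{(k)}_{1:k-1}\one_{k-1}^T-\one_{k-1}(s^{(k)}_{1:k-1})^T\bigr).
\]
Substituting into the active update \eqref{eq:active-update} yields
\[
U^{(k-1)}=U_{[k-1]}-\bigl(w\one_{k-1}^T-\one_{k-1}w^T\bigr),
\qquad
w=s^{(k)}_{1:k-1}+v_k.
\]
The map $s\mapsto s\one^T-\one s^T$ is linear, so the two consistent corrections combine into the single one with vector $w$. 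Since $w=s^{(k-1)}$, the invariant holds at level $k-1$, which closes the induction. The recursion stops at $k=3$, so every parameter $v_n,\ldots,v_3$ coincides.

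\emph{Main obstacle.} The only delicate point is the index bookkeeping. One must check that restricting the leading block of the correction to the first $k-1$ indices produces exactly the truncated vector $s^{(k)}_{1:k-1}$. One must also check that the entry $u_{ik}$ used at level $k$ is the entry of the original $U$, which holds because $U_{[k]}$ is a leading block of $U$. Beyond this, only linearity is used. It is also worth noting that $s^{(k)}$ need not lie in $\one^\perp$, and the argument never requires it to.
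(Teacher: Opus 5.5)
Your proof is correct and follows essentially the same route as the paper: a downward induction on $k$ with the cumulative representation \eqref{eq:cumulative-assumption} as the invariant, reading $v_k=-(k-1)^{-1}a^{(k)}$ off the last column via the zero-row-sum condition, and using linearity of $z\mapsto z\one^T-\one z^T$ to merge the two consistent-form corrections into the single one with vector $s^{(k)}_{1:k-1}+v_k$. Your write-up is slightly more explicit than the paper's, for example in displaying $B^{(k)}$ and noting that $s^{(k)}$ need not be centered; the only loose point is that $U^{(k)}\in\res_k$ comes from Theorem~\ref{thm:global} at $k=n$ and from Theorem~\ref{thm:one-step} applied at the previous level for $k<n$, rather than from Theorem~\ref{thm:one-step} alone.
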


\begin{proof}
At $k=n$, \eqref{eq:cumulative-assumption} holds with $s^{(n)}=0$.
Assume that it holds at active size $k$; thus $s^{(k)}$ records the cumulative
correction vector that defines the consistent-form term applied to the leading
block $U_{[k]}$ of the original global residual matrix. This term converts $U_{[k]}$ into the current active
residual matrix $U^{(k)}$. Its last column and the zero-row-sum condition give
$v_k=-(k-1)^{-1}a^{(k)}$, which is the first formula in
\eqref{eq:compact-update} and agrees with Theorem~\ref{thm:one-step}. Applying
the leading-block update yields
\[
U^{(k-1)}
=U_{[k-1]}-
C_{k-1}\!\left(s_{1:k-1}^{(k)}+v_k\right),
\]
where $C_{k-1}(z)=z\one_{k-1}^T-\one_{k-1}z^T$. Thus the second formula in
\eqref{eq:compact-update} establishes the induction step.
\end{proof}

At step $k$, the compact parameter update reads one active column and costs
$O(k)$ operations. Hence compact extraction of all parameters costs
$O(n^2)$, including the initial $O(n^2)$ global split. This statement concerns
parameter extraction; explicitly materializing every full layer matrix is a
separate output operation. No claim about numerical stability is made on the basis of the
operation count. Thus the cumulative formulation reduces parameter-extraction
complexity from $O(n^3)$ for the explicit active-matrix recursion to $O(n^2)$,
while producing exactly the same layer parameters.
\section{Multilevel inconsistency diagnostics}
\label{sec:diagnostics}

\subsection{Frobenius residual-energy profile}
\label{subsec:energy-profile}

The mutual orthogonality in Theorem~\ref{thm:full-multilevel} gives
\begin{equation}
\normF{U}^2=\sum_{k=3}^n\normF{L_k}^2.
\label{eq:residual-energy}
\end{equation}
Define
\[
E_k=\normF{L_k}^2=2k(k-1)\lVert v_k\rVert_2^2.
\]
For $U\ne0$, the normalized nested-chain residual-energy profile is
\begin{equation}
p_k=\frac{E_k}{\normF{U}^2},
\qquad
p_k\geq0,
\qquad
\sum_{k=3}^np_k=1.
\label{eq:normalized-profile}
\end{equation}
The residual-energy profile describes how the fixed global residual energy $\normF{U}^2$ is
distributed across the layers of the fixed nested chain. It is not a
stand-alone inconsistency measure and should be interpreted together with the
global residual magnitude $\normF{U}$; its interpretation is structural
rather than causal.

\subsection{Dependence on the ordering}
\label{subsec:ordering}

For a permutation matrix $P$, let
\[
A^P=PAP^T,
\qquad
x^P=Px,
\qquad
U^P=PUP^T.
\]
The Frobenius norm is invariant under this simultaneous relabeling, so
\begin{equation}
\normF{U^P}=\normF{U}.
\label{eq:permutation-invariance}
\end{equation}
Thus simultaneous relabeling preserves the total residual energy. The
decomposition, however, uses a leading-principal nested chain, so the
individual layer shares generally depend on the chosen ordering. The
residual-energy profile is therefore an ordering-conditioned structural diagnostic. We make no claim that any ordering is optimal, nor do we propose an order-selection criterion.

\subsection{Geometric-mean priorities and the consistent component}
\label{subsec:gm}

\begin{proposition}
\label{prop:gm}
Let $A=\LogH(M)$ and $x=n^{-1}A\one_n$. Then
\[
x_i=\frac1n\sum_{j=1}^n\log m_{ij},
\qquad
e^{x_i}=\left(\prod_{j=1}^nm_{ij}\right)^{1/n}.
\]
Consequently, the normalized geometric-mean priority vector is
\begin{equation}
w_i=\frac{e^{x_i}}{\sum_{\ell=1}^ne^{x_\ell}}.
\label{eq:gm-priorities}
\end{equation}
\end{proposition}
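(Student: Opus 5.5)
The argument is a direct computation from the definitions, carried out in three short steps.

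\emph{Step 1: the formula for $x_i$.} Since $A=\LogH(M)$ has entries $a_{ij}=\log m_{ij}$, the $i$th component of $A\one_n$ is $\sum_{j=1}^n a_{ij}=\sum_{j=1}^n\log m_{ij}$. Dividing by $n$ gives
\[
x_i=\frac1n\sum_{j=1}^n\log m_{ij},
\]
which is the vector $x$ of Theorem~\ref{thm:global}.

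\emph{Step 2: exponentiation.} The scalar exponential turns the sum of logarithms into a logarithm of a product, since every $m_{ij}>0$. Hence
\[
e^{x_i}=\exp\!\left(\frac1n\log\prod_{j=1}^n m_{ij}\right)=\left(\prod_{j=1}^n m_{ij}\right)^{1/n}.
\]
This is the row geometric mean of $M$ \cite{CrawfordWilliams1985}.

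\emph{Step 3: normalization.} Dividing each $e^{x_i}$ by $\sum_{\ell}e^{x_\ell}$ produces a positive vector whose entries sum to one. This is exactly the normalized geometric-mean priority vector \eqref{eq:gm-priorities}.

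It is also worth recording the link to the consistent component, which justifies the remark that $C$ carries the ranking. Because $C=C_n(x)$ has entries $c_{ij}=x_i-x_j$, we get
\[
\ExpH(C)_{ij}=\frac{e^{x_i}}{e^{x_j}}=\frac{w_i}{w_j}.
\]
So $\ExpH(C)$ is the consistent PC matrix generated by $w$. Moreover, the multilevel layers $L_k$ of Theorem~\ref{thm:full-multilevel} change only $U$ and leave $x$ untouched, so the ranking is unaffected by the recursive decomposition.

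No step is a genuine obstacle. The only points needing care are positivity of the $m_{ij}$, which makes the logarithms and the product identity valid, and scale invariance of the normalization. On the latter: $\sum_i x_i=0$ by Theorem~\ref{thm:global}, which pins down one representative of the score vector, but any common positive rescaling of the $e^{x_i}$ would yield the same $w$.
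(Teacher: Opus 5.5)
Your proposal is correct and follows essentially the same route as the paper: you read off the $i$th coordinate of $n^{-1}A\one_n$, exponentiate the row average of logarithms to obtain the row geometric mean, and normalize to get \eqref{eq:gm-priorities}. Your additional remarks, that $\ExpH(C)_{ij}=w_i/w_j$ and that the layer decomposition leaves $x$ unchanged, are accurate; the paper states the same points after the proof and in Section~\ref{sec:reconstruction}.
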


\begin{proof}
The first identity is the $i$-th coordinate of $x=n^{-1}A\one_n$.
Exponentiating the row average of the logarithms gives the second identity
and then \eqref{eq:gm-priorities}.
\end{proof}

Thus $x$ determines the consistent ranking component, while $U$ is the
global residual matrix. Decomposing $U$ into layers does not alter the geometric-mean
ranking.

\subsection{Triad defects and the Koczkodaj inconsistency indicator}
\label{subsec:triads}

For $A\in\so(n)$, define
\[
\bigl(\mathcal T_n(A)\bigr)_{ijk}
=a_{ij}+a_{jk}-a_{ik},
\qquad i<j<k.
\]
The vector $\mathcal T_n(A)$ collects all logarithmic triad defects indexed by
$i<j<k$. Because $\mathcal T_n(C_n(x))=0$, one has
$\mathcal T_n(A)=\mathcal T_n(U)$. Related geometric and norm-based
treatments of cyclic pairwise data are discussed in
\cite{JiangLimYaoYe2011,FedrizziCivolaniCritch2020}.

\begin{theorem}
\label{thm:triad-identity}
For every $U\in\res_n$,
\begin{equation}
\lVert\mathcal T_n(U)\rVert_2^2=\frac n2\normF{U}^2.
\label{eq:triad-identity}
\end{equation}
\end{theorem}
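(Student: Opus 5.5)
The plan is to expand the sum of squared triad defects directly and reduce every resulting term to quantities controlled by $\normF{U}^2$ and the zero-row-sum condition $U\one_n=0$. The first step is to symmetrize the index range. For skew-symmetric $U$, the quantity $t_{ijk}=u_{ij}+u_{jk}+u_{ki}$ (note $-u_{ik}=u_{ki}$) changes at most in sign under any permutation of $(i,j,k)$. It also vanishes whenever two indices coincide: if $i=j$, then $t=u_{ik}+u_{ki}=0$. Therefore
\[
\lVert\mathcal T_n(U)\rVert_2^2=\sum_{i<j<k}t_{ijk}^2=\frac16\sum_{i,j,k=1}^n t_{ijk}^2 ,
\]
which lets us work with unrestricted sums over $i,j,k$.

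Next, I will expand $t_{ijk}^2$ into three square terms and three cross terms.
\begin{itemize}
\item Each square term, e.g. $\sum_{i,j,k}u_{ij}^2$, equals $n\normF{U}^2$, because the free third index contributes a factor $n$. The three squares together give $3n\normF{U}^2$.
\item Each cross term, e.g. $2\sum_{i,j,k}u_{ij}u_{jk}$, factors as $2\sum_{j}\bigl(\sum_i u_{ij}\bigr)\bigl(\sum_k u_{jk}\bigr)$. By the zero-row-sum property, and hence the zero-column-sum property via skew-symmetry, each such factored sum vanishes.
\end{itemize}
The same factorization applies to the cross terms $u_{jk}u_{ki}$ and $u_{ki}u_{ij}$, each of which shares exactly one index between the two factors.

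Combining these gives $\sum_{i,j,k}t_{ijk}^2=3n\normF{U}^2$. Dividing by $6$ yields $\frac n2\normF{U}^2$, which is \eqref{eq:triad-identity}.

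The step needing the most care is the reduction to ordered sums. I must check that every permutation of $(i,j,k)$ gives $\pm t_{ijk}$, so that the squares agree, and that degenerate triples contribute zero, so that the factor is exactly $1/6$. A quick check on a transposition, $t_{jik}=u_{ji}+u_{ik}+u_{kj}=-t_{ijk}$, together with cyclic invariance, settles this.

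I also need to confirm that each cross term shares precisely one summation index, so that it factors through a row or column sum of $U$. This is the only place where membership $U\in\res_n$, and not merely $U\in\so(n)$, is used. For general $A\in\so(n)$ the cross terms would instead produce $-\normF{\cdot}$-type contributions involving $\lVert A\one_n\rVert_2^2$, and the identity would fail.
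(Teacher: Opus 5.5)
Your proof is correct and is essentially the paper's argument. The paper asserts, by direct expansion, the general identity $\sum_{i<j<k}(a_{ij}+a_{jk}-a_{ik})^2=\frac n2\normF{A}^2-\lVert A\one_n\rVert_2^2$ for $A\in\so(n)$ and then sets $A\one_n=0$; your symmetrized expansion over ordered triples carries out that expansion explicitly, and if you keep each cross-term sum of the form $\sum_{i,j,k}a_{ij}a_{jk}$ rather than discarding it, it equals $-\lVert A\one_n\rVert_2^2$, which gives exactly the paper's general formula.
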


\begin{proof}
For $A\in\so(n)$, direct expansion gives
\[
\sum_{i<j<k}(a_{ij}+a_{jk}-a_{ik})^2
=\frac n2\normF{A}^2-\lVert A\one_n\rVert_2^2.
\]
Since $U\one_n=0$, the assertion follows.
\end{proof}

\begin{corollary}
\label{cor:triad-layers}
For the decomposition $U=L_n+\cdots+L_3$, the vectors
$\mathcal T_n(L_k)$ are mutually orthogonal and
\[
\lVert\mathcal T_n(L_k)\rVert_2^2=\frac n2\normF{L_k}^2.
\]
Therefore
\[
\lVert\mathcal T_n(U)\rVert_2^2
=\sum_{k=3}^n\lVert\mathcal T_n(L_k)\rVert_2^2,
\qquad
p_k=\frac{\lVert\mathcal T_n(L_k)\rVert_2^2}
{\lVert\mathcal T_n(U)\rVert_2^2}.
\]
\end{corollary}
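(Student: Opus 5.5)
The plan is to obtain the corollary from Theorem~\ref{thm:triad-identity} by polarization, combined with the orthogonality already proved in Theorem~\ref{thm:full-multilevel}. The map $\mathcal T_n:\so(n)\to\R^{\binom n3}$ is linear. Hence $Q(A)=\lVert\mathcal T_n(A)\rVert_2^2$ is a quadratic form, with associated symmetric bilinear form $\beta(A,B)=\langle\mathcal T_n(A),\mathcal T_n(B)\rangle$.

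The key observation is that every layer lies in $\res_n$. By construction $G_k(v_k)\in\res_k$, and the leading-block embedding $\iota_{k,n}$ preserves skew-symmetry. It also preserves the zero-row-sum property, since the padded rows are zero. Therefore $L_k\in\res_n$ for each $k$, and Theorem~\ref{thm:triad-identity} applies to each $L_k$ directly:
\[
\lVert\mathcal T_n(L_k)\rVert_2^2=\frac n2\normF{L_k}^2 .
\]

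Next I would polarize on the linear subspace $\res_n$. On this subspace Theorem~\ref{thm:triad-identity} says $Q(W)=\frac n2\normF{W}^2$ for all $W\in\res_n$. For $W,W'\in\res_n$ the sum $W+W'$ also lies in $\res_n$. Using $2\beta(W,W')=Q(W+W')-Q(W)-Q(W')$ and the same identity for the Frobenius form gives
\[
\langle\mathcal T_n(W),\mathcal T_n(W')\rangle=\frac n2\ip{W}{W'} .
\]
Applying this with $W=L_j$ and $W'=L_k$ for $j\neq k$, the mutual Frobenius orthogonality from Theorem~\ref{thm:full-multilevel} yields $\langle\mathcal T_n(L_j),\mathcal T_n(L_k)\rangle=0$.

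Finally, linearity gives $\mathcal T_n(U)=\sum_k\mathcal T_n(L_k)$, and the Pythagorean theorem for these mutually orthogonal vectors gives the stated sum. For the formula for $p_k$, assume $U\neq0$ and divide
\[
\lVert\mathcal T_n(L_k)\rVert_2^2=\frac n2 E_k
\qquad\text{by}\qquad
\lVert\mathcal T_n(U)\rVert_2^2=\frac n2\normF{U}^2 .
\]
The factor $\frac n2$ cancels, leaving the definition \eqref{eq:normalized-profile}.

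The only step requiring care is checking that the embedded layers genuinely lie in $\res_n$. The polarization argument needs the identity on the whole subspace, not merely on $U$ itself. This is the membership check made above, and it is immediate from the block structure.
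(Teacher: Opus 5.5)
Your proposal is correct and follows the paper's proof: you polarize Theorem~\ref{thm:triad-identity} on the subspace $\res_n$, then combine it with the mutual Frobenius orthogonality of the layers from Theorem~\ref{thm:full-multilevel}. You also check explicitly that each embedded layer $L_k=\iota_{k,n}(G_k(v_k))$ lies in $\res_n$, a step the polarization argument needs and which the paper leaves implicit.
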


\begin{proof}
Apply Theorem~\ref{thm:triad-identity} by polarization and use the
Frobenius orthogonality of the layers.
\end{proof}

Thus the same normalized layer shares are obtained in Frobenius coordinates
and in triad-defect coordinates.

For a positive reciprocal pairwise comparison matrix, set
\[
\delta_{ijk}=\log\!\left(\frac{m_{ij}m_{jk}}{m_{ik}}\right).
\]
Using the standard Koczkodaj triad formulation
\cite{koczkodaj1993new,koczkodaj2018axiomatization}, its logarithmic form is
\begin{equation}
\operatorname{Kii}(M)
=1-\exp\!\left(-\lVert\mathcal T_n(U)\rVert_\infty\right).
\label{eq:koczkodaj}
\end{equation}
The two diagnostics therefore probe complementary norms of the same global
residual matrix: the residual-energy profile resolves its $\ell^2$ energy, whereas Kii records the
severity of its largest triad defect.

\begin{table}[!ht]
\centering
\caption{Complementary outputs from the same logarithmic decomposition.}
\label{tab:diagnostic-comparison}
\small
\begin{tabular}{>{\raggedright\arraybackslash}p{0.23\linewidth}
                >{\centering\arraybackslash}p{0.31\linewidth}
                >{\raggedright\arraybackslash}p{0.34\linewidth}}
\toprule
Output & Mathematical quantity & Information \\
\midrule
GM priorities & $w_i\propto e^{x_i}$ & relative priorities \\
Kii & $1-e^{-\lVert\mathcal T_n(U)\rVert_\infty}$ & worst-triad severity \\
Nested-chain residual-energy profile & $\normF{U}^2$ and $p_k$ & residual-energy distribution \\
\bottomrule
\end{tabular}
\end{table}

Since
\[
\delta_{ijk}(U)=\sum_{r=3}^n\delta_{ijk}(L_r),
\]
a worst triad cannot in general be assigned uniquely to a single layer.

In a common consistency-driven approach, triad inconsistency values are
computed, a maximally inconsistent triad is identified, and the corresponding
assessments are revised to reduce the inconsistency; the procedure is repeated
until a prescribed tolerance threshold is reached
\cite{JanickiKoczkodaj1998,KoczkodajKosiekSzybowskiXu2015}. A threshold of
$1/3$ has been used in the literature and has been reported to be sufficient for most
applications \cite{KoczkodajKosiekSzybowskiXu2015}. The multilevel
decomposition developed here complements this consistency-driven approach by
providing structural information about the global residual matrix.

\section{Exact multiplicative reconstruction}
\label{sec:reconstruction}

Let the additive expansion from Theorem~\ref{thm:full-multilevel} be
\[
A=C_n(x)+L_n+L_{n-1}+\cdots+L_3.
\]
Define
\[
M_{\mathrm{con}}=\ExpH(C_n(x)),
\qquad
M_k=\ExpH(L_k),\quad k=3,\ldots,n.
\]

\begin{theorem}
\label{thm:multiplicative}
For a fixed ordering of the alternatives, every positive reciprocal
pairwise comparison matrix $M$ admits the exact factorization
\begin{equation}
M=M_{\mathrm{con}}\odot M_n\odot M_{n-1}\odot\cdots\odot M_3.
\label{eq:multiplicative}
\end{equation}
The factors induced by the multilevel decomposition are unique relative to
the fixed nested chain.
\end{theorem}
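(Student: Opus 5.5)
The factorization follows by transporting the additive expansion of Theorem~\ref{thm:full-multilevel} through the entrywise exponential. Set $A=\LogH(M)$. Reciprocity gives $A\in\so(n)$, so Theorem~\ref{thm:global} produces $x=n^{-1}A\one_n$ and the residual $U\in\res_n$. Theorem~\ref{thm:full-multilevel} then gives
\[
A=C_n(x)+L_n+L_{n-1}+\cdots+L_3
\]
along the fixed chain. Applying $\ExpH$ and using \eqref{eq:entrywise-addition} repeatedly, by induction on the number of summands, we obtain
\[
\ExpH(A)=\ExpH(C_n(x))\odot\ExpH(L_n)\odot\cdots\odot\ExpH(L_3).
\]
Finally, $\ExpH(\LogH(M))=M$ because both maps act entrywise and $\log$ inverts $\exp$ on positive reals. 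This yields \eqref{eq:multiplicative}.

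We should also check that every factor is itself a positive reciprocal pairwise comparison matrix, so the statement lives in the right class. The matrices $C_n(x)$ and $G_k(v_k)$ are skew-symmetric. The leading-block embedding $\iota_{k,n}$ pads with zeros, so $L_k$ is skew-symmetric as well. Hence each $M_k=\ExpH(L_k)$ is positive and reciprocal, and its entries outside the leading $k\times k$ block are equal to $1$. The same holds for $M_{\mathrm{con}}$, which is moreover multiplicatively consistent because $C_n(x)$ satisfies the additive consistency condition.

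For uniqueness, interpreted as uniqueness of the factors among those of the prescribed form, suppose
\[
M=\ExpH(C')\odot\ExpH(L_n')\odot\cdots\odot\ExpH(L_3'),
\]
where $C'\in\Cons_n$ and $L_k'=\iota_{k,n}(G_k(v_k'))$ for some $v_k'\in\one_{k-1}^\perp$ arising from the chain. Taking $\LogH$ entrywise is legitimate because all factors are positive. By \eqref{eq:entrywise-addition} read backwards, this gives
\[
A=C'+L_n'+\cdots+L_3'.
\]
Each $L_k'$ lies in $\res_n$. Indeed, $G_k(v)\in\res_k$, and the zero-padded embedding preserves both skew-symmetry and the zero-row-sum condition. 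Therefore the direct-sum decomposition of Theorem~\ref{thm:global} forces $C'=C_n(x)$ and $\sum_k L_k'=U$. The uniqueness part of Theorem~\ref{thm:full-multilevel} then gives $L_k'=L_k$ for every $k$.

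One point needs care: "unique relative to the fixed nested chain" should mean that the $L_k'$ have the admissible form $\iota_{k,n}(G_k(v))$. Under that reading, uniqueness is immediate, since the parameters $v_k$ are recovered one level at a time, exactly as in Theorem~\ref{thm:one-step}. Injectivity of $\ExpH$ on real matrices then transfers uniqueness of the layers to uniqueness of the factors $M_{\mathrm{con}},M_n,\ldots,M_3$.

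The main obstacle is not computational. It is stating precisely the class of admissible factors, so that the uniqueness claim is neither vacuous nor false. Arbitrary Hadamard factorizations of $M$ are of course not unique, so the claim must refer to the nested-chain structure.
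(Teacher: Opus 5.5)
Your proposal is correct and follows the paper's proof. You exponentiate the additive expansion of Theorem~\ref{thm:full-multilevel} using \eqref{eq:entrywise-addition}, and you get uniqueness by pulling a competing chain-induced factorization back through $\LogH$, which contradicts the uniqueness in that theorem. Your explicit admissible class ($C'\in\Cons_n$, $L_k'=\iota_{k,n}(G_k(v_k'))$ with $v_k'\in\one_{k-1}^\perp$) and your use of the direct sum in Theorem~\ref{thm:global} to force $C'=C_n(x)$ only spell out what the paper's one-line uniqueness argument leaves implicit.
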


\begin{proof}
Apply \eqref{eq:entrywise-addition} to the unique additive decomposition of
$A=\LogH(M)$ relative to the fixed chain:
\[
\ExpH(A)=\ExpH(C_n(x))\odot
\ExpH(L_n)\odot\cdots\odot\ExpH(L_3).
\]
This gives \eqref{eq:multiplicative}. If two such chain-induced factorizations
existed, applying the injective map $\LogH$ would give two additive
decompositions of $A$ for the fixed chain, contradicting
Theorem~\ref{thm:full-multilevel}.
\end{proof}

The consistent factor has entries
\[
(M_{\mathrm{con}})_{ij}=e^{x_i-x_j},
\]
so it is generated by the same potential that determines the geometric-mean
priorities. If
\[
M_{\mathrm{res}}=M_n\odot M_{n-1}\odot\cdots\odot M_3,
\]
then
\[
M=M_{\mathrm{con}}\odot M_{\mathrm{res}},
\qquad
\LogH(M_{\mathrm{res}})=U.
\]
Thus the multiplicative reconstruction retains the exact layer information
without introducing a second, distinct residual concept.
\section{Numerical experiments}
\label{sec:numerics}

\subsection{Computational verification of the recursive algorithm}
\label{subsec:verification}

The verification used
\[
A=\frac12(R-R^T),
\]
where the entries of $R$ were independent standard normal variates. With the
fixed seed $20260819$ (YYYYMMDD), ten independent matrices were tested for each
$n\in\{10,20,40,80,160\}$. The explicit recursion (E) and the compact
cumulative recursion (C) were compared using
\[
\varepsilon_{\rm par}
=\max_k\frac{\lVert v_k^{(E)}-v_k^{(C)}\rVert_2}
{1+\lVert v_k^{(E)}\rVert_2},
\]
\[
\varepsilon_{\rm rec}
=\frac{\left\lVert A-C_n(x)-\sum_{k=3}^nL_k\right\rVert_F}
{1+\normF{A}},
\qquad
\varepsilon_{\rm orth}
=\max_{j\ne k}\frac{|\ip{L_j}{L_k}|}
{1+\normF{L_j}\normF{L_k}}.
\]
Table~\ref{tab:benchmark} reports the maximum error over the ten tests and
the median runtime for decomposition only.

\begin{table}[htbp]
\centering
\caption{Maximum scaled errors and median decomposition runtimes over ten
independent matrices. Runtimes are in milliseconds.}
\label{tab:benchmark}
\scriptsize
\begin{tabular}{rccccc}
\toprule
$n$ & $\max\varepsilon_{\rm par}$ & $\max\varepsilon_{\rm rec}$ &
$\max\varepsilon_{\rm orth}$ & median E & median C \\
\midrule
10  & $1.09\times10^{-16}$ & $9.83\times10^{-17}$ & $9.97\times10^{-17}$ & 0.102 & 0.0578 \\
20  & $1.83\times10^{-16}$ & $9.97\times10^{-17}$ & $1.88\times10^{-16}$ & 0.223 & 0.1128 \\
40  & $3.07\times10^{-16}$ & $1.03\times10^{-16}$ & $3.66\times10^{-16}$ & 0.523 & 0.2296 \\
80  & $8.71\times10^{-16}$ & $1.04\times10^{-16}$ & $1.15\times10^{-15}$ & 1.425 & 0.4610 \\
160 & $1.89\times10^{-15}$ & $1.21\times10^{-16}$ & $2.13\times10^{-15}$ & 5.120 & 1.1392 \\
\bottomrule
\end{tabular}
\end{table}

The computations were performed in Python 3.12.13 with NumPy 2.3.5 under
Windows 11 (build 22000) on an Intel64 Family 6 Model 94 processor. These timings are reported only for this particular run and are not used to support an asymptotic
complexity claim.

The maximum scaled error in the table is $2.13\times10^{-15}$. A separate
identity suite on dimensions $3,4,5,8,12,20,40$ also checked the global split,
energy identity, triad identity, permutation invariance, and multiplicative
reconstruction; its maximum scaled error was $9.99\times10^{-16}$. 

\subsection{A synthetic five-alternative comparison}
\label{subsec:five-alternative}

Consider five generic alternatives $T_1,\ldots,T_5$ with logarithmic
comparison matrix
\[
A=\frac1{10}
\begin{pmatrix}
0&-1&4&6&-3\\
1&0&-5&-4&3\\
-4&5&0&7&2\\
-6&4&-7&0&-7\\
3&-3&-2&7&0
\end{pmatrix}.
\]
Its row sums and consistent potential are
\[
A\one_5=
\begin{pmatrix}3/5\\-1/2\\1\\-8/5\\1/2\end{pmatrix},
\qquad
x=
\begin{pmatrix}3/25\\-1/10\\1/5\\-8/25\\1/10\end{pmatrix}.
\]
The global components are
\[
C_5(x)=\frac1{50}
\begin{pmatrix}
0&11&-4&22&1\\
-11&0&-15&11&-10\\
4&15&0&26&5\\
-22&-11&-26&0&-21\\
-1&10&-5&21&0
\end{pmatrix}
\]
and
\[
U^{(5)}=\frac1{50}
\begin{pmatrix}
0&-16&24&8&-16\\
16&0&-10&-31&25\\
-24&10&0&9&5\\
-8&31&-9&0&-14\\
16&-25&-5&14&0
\end{pmatrix}.
\]
The top-down chain $5\to4\to3$ yields
\[
v_5=
\begin{pmatrix}2/25\\-1/8\\-1/40\\7/100\end{pmatrix},
\quad
v_4=
\begin{pmatrix}-1/20\\17/120\\-11/120\end{pmatrix},
\quad
v_3=
\begin{pmatrix}-1/6\\1/6\end{pmatrix},
\]
with active residual matrices
\[
U^{(4)}=\frac1{40}
\begin{pmatrix}
0&-21&15&6\\21&0&-4&-17\\-15&4&0&11\\-6&17&-11&0
\end{pmatrix},
\]
\[
U^{(3)}=\frac13
\begin{pmatrix}0&-1&1\\1&0&-1\\-1&1&0\end{pmatrix}.
\]

\begin{table}[t]
\centering
\caption{Exact layer parameters and residual-energy profile for the
five-alternative example.}
\label{tab:five-alternative-profile}
\small
\begin{tabular}{cclcc}
\toprule
$k$ & $v_k^T$ & $E_k$ & $p_k$ & percentage \\
\midrule
5 & $(2/25,-1/8,-1/40,7/100)$ & $551/500$ & $551/1256$ & $43.87\%$ \\
4 & $(-1/20,17/120,-11/120)$ & $223/300$ & $1115/3768$ & $29.59\%$ \\
3 & $(-1/6,1/6)$ & $2/3$ & $125/471$ & $26.54\%$ \\
\bottomrule
\end{tabular}
\end{table}

Indeed,
\[
\normF{U^{(5)}}^2=\frac{314}{125}
=\frac{551}{500}+\frac{223}{300}+\frac23.
\]
The reciprocal matrix $M=\ExpH(A)$ is
\[
M\approx
\begin{pmatrix}
1&0.905&1.492&1.822&0.741\\
1.105&1&0.607&0.670&1.350\\
0.670&1.649&1&2.014&1.221\\
0.549&1.492&0.497&1&0.497\\
1.350&0.741&0.819&2.014&1
\end{pmatrix}.
\]
The geometric-mean priority vector is
\[
w\approx(0.2217,0.1779,0.2402,0.1428,0.2173)^T,
\]
and hence
\[
T_3>T_1>T_5>T_2>T_4.
\]

The computation gives the following ten triad defects:
\[
\begin{aligned}
\delta_{123}&=-1,&\delta_{124}&=-11/10,&\delta_{125}&=1/2,\\
\delta_{134}&=1/2,&\delta_{135}&=9/10,&\delta_{145}&=1/5,\\
\delta_{234}&=3/5,&\delta_{235}&=-3/5,&\delta_{245}&=-7/5,\\
\delta_{345}&=-1/5.&&
\end{aligned}
\]
Thus
\[
\lVert\mathcal T_5(U)\rVert_\infty=\frac75,
\qquad
\operatorname{Kii}(M)=1-e^{-7/5}\approx0.75340.
\]
The results have distinct interpretations: $w$ supplies priorities; Kii
records the severity of the worst triad, while the largest local defect
identifies the triad attaining that maximum; and the residual-energy profile describes the structural distribution
of residual energy. In particular, the largest
fifth-level contribution does not imply that alternative $T_5$ causes
inconsistency.

Although the present matrix is synthetic, pairwise-comparison methods have
also been applied in clinical data studies, including work on improving
medical-scale predictability \cite{KakiashviliKoczkodajWoodburySmith2012}.

\subsection{Sensitivity of the nested-chain residual-energy profile to the ordering}
\label{subsec:ordering-sensitivity}

All $5!=120$ simultaneous row/column permutations of the same matrix were
enumerated. In every case the total residual energy remained
$314/125=2.512$ up to machine precision, while the nested-chain
residual-energy profiles varied as summarized in
Table~\ref{tab:ordering-summary}.

\begin{table}[t]
\centering
\caption{Residual-energy profile over all 120 simultaneous orderings
(percent).}
\label{tab:ordering-summary}
\small
\begin{tabular}{lrrr}
\toprule
Layer share & Minimum & Median & Maximum \\
\midrule
$100p_5$ & 31.13 & 45.86 & 77.31 \\
$100p_4$ & 1.19  & 33.17 & 67.81 \\
$100p_3$ & 1.06  & 9.55  & 52.02 \\
\bottomrule
\end{tabular}
\end{table}

The range shows that layer percentages depend on the selected ordering. It
does not define an
optimal ordering, and no order-selection rule is proposed.
\section{Conclusions}
\label{sec:conclusions}

The established global consistent--residual decomposition
\cite{KoczkodajSmarzewskiSzybowski2020,SmarzewskiRutka2020,KoczkodajMarekYayli2021}
provides the starting point for this work. For a fixed ordering of the alternatives,
we have developed a recursive multilevel decomposition of the global residual
matrix into unique, mutually Frobenius-orthogonal layers with exact reconstruction.

The explicit active-matrix recursion presented in Algorithm~\ref{alg:explicit}
has computational complexity $O(n^3)$, whereas the equivalent cumulative
parameter-extraction procedure requires only $O(n^2)$ operations. The squared
Frobenius norms of the layers define a nested-chain residual-energy profile that
describes how the total residual energy is distributed across the levels associated
with the fixed ordering.

This residual-energy profile provides an $\ell^2$-type structural description
of the global residual matrix. It complements both the geometric-mean priorities
obtained from the consistent component and Kii, which provides an
$\ell^\infty$-type measure determined by the worst triad defect. Entrywise
exponentiation of the additive decomposition yields an exact Hadamard-product
reconstruction of the original reciprocal matrix.

The numerical experiments confirm agreement between the two computational
procedures, exact reconstruction, mutual orthogonality, and numerical accuracy
to machine precision. They also demonstrate that the total residual energy is
invariant under simultaneous relabeling of the alternatives, that is, under
the same permutation applied simultaneously to the rows and columns, whereas
its distribution among the individual layers generally depends on the chosen
ordering. Future work will investigate principled criteria for comparing or
selecting fixed orderings.

\section*{Acknowledgements}

The authors are grateful to Dr. Julia Johnson, Professor Emeritus at Laurentian University, Sudbury, Ontario, Canada, for her careful proofreading of the manuscript and valuable suggestions on its presentation.



\end{document}